\newtheorem{thm}{Theorem}[section]
\newtheorem{prop}[thm]{Proposition}
\newtheorem{lem}[thm]{Lemma}
\newtheorem{cor}[thm]{Corollary}
\numberwithin{equation}{section}
\theoremstyle{definition}
\newtheorem{definition}[thm]{Definition}
\newtheorem{remark}[thm]{Remark}
\newtheorem{ex}[thm]{Example}
\newcommand{\Db}{{\rm D}^{\rm b}}
\newcommand{\Aut}{{\rm Aut}}
\newcommand{\rk}{{\rm rk}}
\newcommand{\Ext}{{\rm Ext}}
\newcommand{\cal}{\mathcal}
\newcommand{\ka}{{\cal A}}
\newcommand{\kc}{{\cal C}}
\newcommand{\kp}{{\cal P}}
\newcommand{\ZZ}{\mathbb{Z}}
\newcommand{\QQ}{\mathbb{Q}}
\newcommand{\RR}{\mathbb{R}}
\newcommand{\CC}{\mathbb{C}}
\newcommand{\HH}{\mathbb{H}}
\newcommand{\PP}{\mathbb{P}}
\newcommand{\Co}{{C\!o}}
\newcommand{\OO}{{\rm O}}
\newcommand{\wL}{\Lambda}
\renewcommand{\to}{\xymatrix@1@=15pt{\ar[r]&}}
\renewcommand{\rightarrow}{\xymatrix@1@=15pt{\ar[r]&}}
\renewcommand{\mapsto}{\xymatrix@1@=15pt{\ar@{|->}[r]&}}
\renewcommand{\twoheadrightarrow}{\xymatrix@1@=18pt{\ar@{->>}[r]&}}
\renewcommand{\hookrightarrow}{\xymatrix@1@=15pt{\ar@{^(->}[r]&}}
\newcommand{\hook}{\xymatrix@1@=15pt{\ar@{^(->}[r]&}}
\newcommand{\congpf}{\xymatrix@1@=15pt{\ar[r]^-\sim&}}
\renewcommand{\cong}{\simeq}
\begin{document}

\title{On derived categories of K3 surfaces, symplectic automorphisms and the Conway group}

\author[D.\ Huybrechts]{Daniel Huybrechts}

\address{Mathematisches Institut,
Universit{\"a}t Bonn, Beringstr.\ 1, 53115 Bonn, Germany}
\email{huybrech@math.uni-bonn.de}
\begin{abstract} \noindent
  In this note we interpret a recent result of Gaberdiel et al \cite{Gaberdiel} in terms of derived equivalences
of K3 surfaces.  We prove that there is a natural bijection between subgroups of the Conway group
$\Co_0$ with invariant lattice of rank at least four and groups of symplectic derived equivalences of $\Db(X)$
of projective K3 surfaces fixing a stability condition.

As an application we prove that every such subgroup $G\subset\Co_0$ satisfying an additional
condition can be realized as a group of symplectic automorphisms of an irreducible
symplectic variety deformation equivalent to ${\rm Hilb}^n(X)$ of some K3 surface.
 \vspace{-2mm}
\end{abstract}
\dedicatory{Dedicated to Professor Shigeru Mukai on the occasion of his 60th birthday.}

\maketitle
{\let\thefootnote\relax\footnotetext{This work was supported by the SFB/TR 45 `Periods,
Moduli Spaces and Arithmetic of Algebraic Varieties' of the DFG
(German Research Foundation).}
}

\maketitle

%
%


In his celebrated paper \cite{MukMat} Mukai established a bijection between
finite groups of symplectic automorphisms of K3 surfaces $G\subset\Aut_s(X)$
and finite subgroups $G\subset M_{23}$ of the Mathieu group $M_{23}$ with at least five orbits.
An alternative approach relying on Niemeier lattices was  given by Kond{\=o} in \cite{Kondo}.

More recently, physicists observed that groups of supersymmetry preserving automorphisms 
of non-linear $\sigma$-models on K3 surfaces are linked to subgroups of the larger Mathieu group
$M_{24}$ and the even larger Conway group ${C\!o}_1$, both sporadic finite simple groups. The lattice theory used in \cite{Gaberdiel},
ultimately going back to Kond{\=o}, can be reinterpreted in purely mathematical terms 
to prove the following result about derived autoequivalences of K3 surfaces which should be
seen as a derived version of Mukai's classical result.

\begin{thm}\label{thm:main}
For a group $G$ the following conditions are equivalent:\\
i) $G$  is isomorphic to a subgroup of the group $\Aut_s(\Db(X),\sigma)$ of
some complex projective K3 surface $X$ endowed with a
stability condition $\sigma\in{\rm Stab}^{\rm o}(X)$.\\
ii) $G$ is isomorphic to a subgroup of
the Conway group ${C\!o}_0$ with invariant lattice of rank at least four.
\end{thm}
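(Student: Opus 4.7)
The strategy for both directions is to translate between derived autoequivalences of $\Db(X)$ and Hodge isometries of the Mukai lattice $\widetilde{H}(X,\ZZ)$ --- the even unimodular lattice of signature $(4,20)$ --- via the derived global Torelli theorem of Mukai--Orlov--Ploog, and then to relate these lattice actions to automorphisms of the Leech lattice $\wL_{24}$ using the fact that $\Co_0$ is its isometry group. The key geometric observation is that every $G$ as in (i) fixes a canonical positive $4$-plane inside $\widetilde{H}(X,\RR)$, so the coinvariant is negative definite of rank at most $20$, exactly the setting where a Kond\={o}-type embedding into $\wL_{24}$ can take place.

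For (i)$\Rightarrow$(ii), starting from $G\subset\Aut_s(\Db(X),\sigma)$ and passing to the induced action on $\widetilde{H}(X,\ZZ)$, the symplectic condition forces $G$ to fix the positive $2$-plane $P_1:=\mathrm{span}_\RR(\Re\sigma_X,\Im\sigma_X)$ spanned by the holomorphic symplectic form, while fixing $\sigma$ pins down a second positive $2$-plane $P_2$ spanned by the real and imaginary parts of the vector dual to the central charge $Z$. Their sum is a $G$-fixed positive $4$-plane, so the coinvariant $S:=(\widetilde{H}(X,\ZZ)^G)^\perp$ is negative definite of rank $\leq 20$. A straightforward averaging argument gives $Z\equiv 0$ on $S\otimes\QQ$, which combined with the defining property of $\Stab^{\rm o}(X)$ that $Z(\delta)\ne 0$ for every spherical class $\delta$ excludes the \emph{algebraic} $(-2)$-classes in $S$; the non-algebraic ones are to be ruled out by a deformation argument in which $X$ is moved inside its period domain so that every class in $S$ becomes algebraic, with the $G$-action and the chamber of $\sigma$ carried along. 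Once $S$ is rootless, a Kond\={o}-style argument embeds it primitively and $G$-equivariantly into $\wL_{24}$, yielding $G\subset\Co_0$ with invariant lattice $S^\perp\subset\wL_{24}$ of rank $\geq 4$.

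For the converse direction, beginning with $G\subset\Co_0$ whose invariant lattice $L\subset\wL_{24}$ has rank $\geq 4$ and coinvariant $K:=L^\perp$, I would use Nikulin's gluing theorems to embed $K$ primitively and $G$-equivariantly into the abstract Mukai lattice $\widetilde{\wL}\cong U^{\oplus 4}\oplus E_8(-1)^{\oplus 2}$, with orthogonal complement $T$ of signature $(4,20-\rk K)$ carrying the trivial $G$-action. Choosing a $G$-invariant Hodge structure of K3 type with period in $T\otimes\CC$ and a generic $G$-invariant central charge $Z$ vanishing on $K$, surjectivity of the period map produces a projective K3 surface $X$ with $\widetilde{H}(X,\ZZ)\cong\widetilde{\wL}$, the derived Torelli theorem lifts the orientation-preserving lattice action of $G$ to symplectic autoequivalences of $\Db(X)$, and $Z$ defines a $G$-fixed stability condition $\sigma\in\Stab^{\rm o}(X)$.

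The step I expect to be the genuine difficulty is the ruling out of \emph{all} $(-2)$-classes in the coinvariant $S$ in (i)$\Rightarrow$(ii). Only the algebraic ones are directly excluded by $\sigma\in\Stab^{\rm o}(X)$; the non-algebraic ones demand a careful deformation argument, establishing that the triple $(X,G,\sigma)$ can be moved inside its moduli without destroying either the symplectic $G$-action or membership in the distinguished chamber $\Stab^{\rm o}$, until every class of $S$ has become algebraic. Once $S$ is known to be rootless, the remaining pieces --- Nikulin's gluing theory, Kond\={o}'s Niemeier-type embedding into $\wL_{24}$, and the derived Torelli theorem --- combine more or less mechanically to deliver both implications.
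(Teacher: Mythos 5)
Your overall architecture (pass to the Mukai lattice via the action on cohomology, produce a $G$-fixed positive four-space, embed the rootless coinvariant into the Leech lattice, and reverse the construction via Nikulin plus surjectivity of the period map) is the paper's, but two points need correction. First, the step you single out as ``the genuine difficulty'' is vacuous. Since $G$ (or rather its image in the isometry group, which lies in the finite group $\Aut_s(\widetilde H(X,\ZZ),Z)$, cf.\ Remark \ref{rem:finite}) is finite, $(\widetilde H(X,\ZZ)^G)\otimes\RR=(\widetilde H(X,\RR))^G$ contains $P_X\oplus P_Z$, so every class of $S=(\widetilde H(X,\ZZ)^G)^\perp$ is orthogonal to $P_X$; an integral class orthogonal to $P_X$ pairs to zero with $H^{2,0}(X)$ and is therefore \emph{automatically algebraic}, i.e.\ lies in $\widetilde H^{1,1}(X,\ZZ)$. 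Hence there are no non-algebraic classes in $S$ at all, and any $(-2)$-class in $S$ would be an algebraic $(-2)$-class with $Z(\delta)=0$, contradicting $Z\in\kp_0^+(X)$; this is exactly how the paper shows $\Pi_{X,Z}^\perp$ is rootless in Section \ref{sec:proofi-ii}. The deformation of $(X,G,\sigma)$ you propose is unnecessary and, as sketched, not an argument. What does require care at this stage, and is missing from your outline, is that $G\subset\Aut_s(\Db(X),\sigma)$ maps \emph{injectively} to $\Aut_s(\widetilde H(X,\ZZ),Z)$: the kernel of $\rho$ on $\Aut(\Db(X))$ is enormous (shifts, squares of spherical twists), and injectivity on the stabilizer of $\sigma$ is Proposition \ref{prop:liftHodge}, proved via the covering ${\rm Stab}^{\rm o}(X)\to\kp_0^+(X)$ (a deck transformation with a fixed point is the identity); the same uniqueness statement is what makes the lift in your direction (ii)$\Rightarrow$(i) a group homomorphism rather than an element-wise choice.

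The genuine gap is the step you call a mechanical ``Kond{\=o}-style'' embedding of the rootless coinvariant into the Leech lattice $N$. Kond{\=o}'s method embeds $S\oplus A_1(-1)$ into a Niemeier lattice, which then necessarily contains roots and so is never the Leech lattice; it leads to $M_{24}$, and groups such as $(\ZZ/3\ZZ)^{\oplus 4}.A_6$ occurring here show that this cannot suffice. The direct application of Nikulin's embedding theorem would need the strict inequality $\ell(A_S)<24-\rk\,S$, whereas Lemma \ref{lem:stand} gives only $\ell(A_S)\le 24-\rk\,S$. Closing exactly this gap is the content of \cite{Gaberdiel} (Proposition \ref{prop:Gab}, Section \ref{sec:i-ii}): embed $S\oplus A_1(-1)$ into $\Gamma\cong{\rm II}_{1,25}$, extend $G$ trivially, use rootlessness of $S$ to see that $\Gamma^G$ meets a Weyl chamber, conjugate by the Weyl group (generated by reflections in Leech roots) so that $G$ lies in the stabilizer $\Co_\infty$ of the distinguished chamber, invoke Borcherds' theorem that $\Co_\infty$ fixes the Weyl vector $w$, and obtain $S\hookrightarrow w^\perp/\ZZ\cdot w\cong N$ with $\rk\,N^G\ge4$. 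Unless you import that result wholesale, your write-up does not prove (i)$\Rightarrow$(ii). Your sketch of (ii)$\Rightarrow$(i) contains the right ingredients, but it glosses the point the paper must add to \cite{Gaberdiel}: the invariant four-space has to be arranged in the form $\Pi_{X,Z}$, i.e.\ one positive plane inside the $H^2$-part of a splitting $\Lambda\cong\Lambda_0\oplus U_0$ (possible because the chosen rank-four positive sublattice meets $\Lambda_0$ in rank $\ge2$), which makes $X$ projective of Picard number $20$; the second plane then automatically has the form $P_Z$ with $Z=\exp(B+i\alpha)$, and the genericity must exclude $(-2)$-classes orthogonal to the four-space so that $Z$ underlies a stability condition in ${\rm Stab}^{\rm o}(X)$.
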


Here, $\Db(X)=\Db({\rm Coh}(X))$ is the bounded derived category of coherent sheaves on $X$ and $${\rm Stab}^{\rm o}(X)\subset
{\rm Stab}(X)$$ is Bridgeland's distinguished connected component of the space of stability conditions on $\Db(X)$,
see \cite{BrK3}. By $\Aut(\Db(X))$ we denote the group of isomorphism classes
of $\CC$-linear exact auto\-equivalences of the triangulated category $\Db(X)$ 
and by $\Aut_s(\Db(X))\subset\Aut(\Db(X))$  the finite index subgroup of symplectic autoequivalences
(see below for details of these definitions).
Finally, we write $\Aut(\Db(X),\sigma)\subset\Aut(\Db(X))$ for the subgroup of autoequivalences $\Phi$ with
$\Phi^*\sigma=\sigma$ and let $$\Aut_s(\Db(X),\sigma):=\Aut(\Db(X),\sigma)\cap \Aut_s(\Db(X)).$$

To explain the condition in ii), recall that the Conway group $\Co_0$ is by definition the
ortho\-gonal group  of the Leech lattice $N$, i.e.
$$\Co_0:=\OO(N).$$
So, for a subgroup $G\subset\Co_0$ we can consider the invariant lattice $N^G$ and
 ii) means $\rk\,N^G\geq4$. 
Note that whenever $N^G$ is non-trivial, then $G$ does
not contain $-{\rm id}$ and can therefore be realized as a subgroup of the Conway
group $\Co_1:=\Co_0/\{\pm{\rm id}\}$.
We think of the condition $\rk\, N^G\geq 4$ as $G$ acting with at least four
orbits, analogously to Mukai's condition on subgroups $G\subset M_{23}$ acting with at least five
orbits on $\{1,\ldots,24\}$.

\medskip

Note that a finite group $G\subset \Aut(X)$ always leaves invariant one ample class $\alpha\in H^{1,1}(X,\ZZ)$
and, therefore, can be lifted to a subgroup of $\Aut(\Db(X),\sigma_\alpha)$, where
$\sigma_\alpha$ is the canonical stability condition with stability function $Z=\langle\exp(i\alpha),~~\rangle$ constructed
in \cite[Sec.\ 6\,\&\,7]{BrK3}. Also, as we shall see, the group $\Aut(\Db(X),\sigma)$ is automatically finite
for any $\sigma\in{\rm Stab}^{\rm o}(X)$, so that all groups in i) (and of course also in ii)) are finite.
Thus, Theorem \ref{thm:main} can indeed be seen as a true generalization of
Mukai's result on finite groups of symplectic automorphisms \cite{MukMat}.

\medskip

Furthermore, Theorem \ref{thm:main} can be used to prove that most of the above groups can be realized
as symplectic automorphisms on higher dimensional analogues of K3 surfaces.


\begin{thm}\label{thm:main2} Assume $G\subset\Co_0$ is a subgroup with
invariant lattice of rank at least four satisfying condition ($\ast$) (see Section \ref{sec:Aut}). 
Then there exists a projective irreducible symplectic variety $Y$ deformation equivalent to
the Hilbert scheme ${\rm Hilb}^n(X)$ of subschemes of length $n$ on a K3 surface $X$ such that 
$G$ is isomorphic to a subgroup of ${\rm Aut}_s(Y)$.
\end{thm}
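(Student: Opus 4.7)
The plan is to combine Theorem \ref{thm:main} with Bayer--Macr\`{\i}'s theory of moduli spaces of Bridgeland-stable objects on K3 surfaces. First I would apply Theorem \ref{thm:main} to upgrade the abstract inclusion $G\subset\Co_0$ to an inclusion $G\subset\Aut_s(\Db(X),\sigma)$ for a suitable projective K3 surface $X$ and stability condition $\sigma\in\Stab^{\rm o}(X)$. Through its action on $\Db(X)$, the group $G$ then acts on the Mukai lattice $\tilde H(X,\ZZ)$, preserving the algebraic part $\tilde H^{1,1}(X,\ZZ)$ as well as the central charge $Z_\sigma$.

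The next step is to produce a $G$-invariant primitive Mukai vector $v\in\tilde H^{1,1}(X,\ZZ)^G$ of positive square such that $\sigma$ is $v$-generic in the sense of Bayer--Macr\`{\i}. The role of the additional hypothesis $(\ast)$ is precisely to guarantee the existence of such a $v$: it should force the $G$-invariant algebraic Mukai lattice to contain primitive vectors of positive square avoiding the walls on which $\sigma$ becomes non-generic. Given $v$, Bayer--Macr\`{\i}'s theorem makes $Y:=M_\sigma(v)$ a smooth projective irreducible symplectic manifold of dimension $v^2+2$, deformation equivalent to ${\rm Hilb}^n(X)$ with $n=v^2/2+1\geq 2$. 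Because every $\Phi\in G$ preserves both $v$ and $\sigma$, the assignment $[E]\mapsto[\Phi(E)]$ yields a well-defined action of $G$ on $Y$.

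That this action is symplectic is essentially formal: the holomorphic $2$-form on $Y$ is induced, via the Mukai isomorphism, from the Hodge structure on $v^\perp\subset\tilde H(X,\ZZ)$, and $G\subset\Aut_s$ preserves both the pairing and the Hodge structure. The main obstacle I anticipate is proving that the homomorphism $G\to\Aut_s(Y)$ is injective. A clean route is via the global Torelli theorem for irreducible symplectic manifolds: a symplectic automorphism of $Y$ acting trivially on $H^2(Y,\ZZ)\cong v^\perp$ must be the identity, so it suffices to exclude that some nontrivial $\Phi\in G$ acts trivially on $v^\perp\subset\tilde H(X,\ZZ)$. Since the kernel of the representation $\Aut_s(\Db(X),\sigma)\to\OO(\tilde H(X,\ZZ))$ is essentially generated by the double shift $[2]$, which itself does act nontrivially on $Y$ (as $E\not\cong E[2]$ for any $\sigma$-stable $E$), this reduces to faithfulness of the $G$-action on the algebraic Mukai lattice --- and this last point is precisely what I would expect condition $(\ast)$ to ensure.
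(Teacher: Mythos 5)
Your overall strategy coincides with the paper's (realize $G\subset\Aut_s(\Db(X),\sigma)$, pick a $G$-invariant Mukai vector $v$, take $Y=M_\sigma(v)$ via Bayer--Macr\`{\i}, check the action is symplectic and faithful), but the proposal leaves the central technical point unproved and misidentifies where the hypothesis ($\ast$) is actually used. The crux is the sentence ``it should force the $G$-invariant algebraic Mukai lattice to contain primitive vectors of positive square avoiding the walls on which $\sigma$ becomes non-generic.'' This is exactly what has to be \emph{proved}, and it is not a matter of merely choosing $v$ primitive in $\widetilde H(X,\ZZ)$: a priori every $G$-invariant stability condition could lie on a wall for $v$ (walls need only be proper subsets of the full space $\Stab^{\rm o}(X)$, not of the invariant locus), so no perturbation argument inside the invariant locus is available without further input. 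In the paper one cannot quote Theorem \ref{thm:main} as a black box either, because its statement gives no control on the invariant algebraic lattice or on the relation between $Z$ and $v$; instead one re-runs the lattice construction so that $X$, $Z$ and $v$ are built together: ($\ast$) gives a rank-three positive definite $L\subset N_G^\perp$ with $\ell(L)<3$, hence a vector $v\in L$ that is primitive \emph{in the dual lattice} $L^*$; one takes the period of $X$ to be $L_1=v^\perp\cap L$ (embedded in the K3 lattice via Nikulin) and chooses the positive plane $P_Z\subset N_G^\perp\otimes\RR$ so that $v\in P_Z$. Only then can one prove $v$-genericity: for a same-phase subobject $F\subset E$ with $v(E)=v$, decomposing $v(F)$ along $L\oplus L^\perp$ and using $Z(F)\in\RR_{>0}\cdot Z(v)$ forces the $L^*$-component of $v(F)$ to be $\lambda v$ with $0<\lambda\le 1$, and primitivity of $v$ in $L^*$ (this is where $\ell(L)<\rk(L)=3$ enters) forces $\lambda=1$, whence $Z$ annihilates $v(E/F)$ and $F=E$. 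None of this mechanism appears in your proposal.

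By contrast, the issue you flag as the ``main obstacle'' --- injectivity of $G\to\Aut(Y)$ --- is not where ($\ast$) is needed and is essentially automatic: by Proposition \ref{prop:liftHodge} the stabilizer $\Aut(\Db(X),\sigma)$ maps isomorphically to $\Aut(\widetilde H(X,\ZZ),Z)$ (in particular the kernel of $\rho$ on the stabilizer is trivial; your claim that it is ``essentially generated by $[2]$'' is also off, since $[2]$ does not fix $\sigma$ and the full kernel of $\rho$ on $\Aut^{\rm o}(\Db(X))$ is the deck transformation group). So $G$ acts faithfully on $\widetilde H(X,\ZZ)$ fixing $v$, hence faithfully on $v^\perp$ (equivalently, $N_G\subset v^\perp$ and $G$ is faithful on $N_G$), and faithfulness on $Y$ follows from compatibility with $v^\perp\cong H^2(M_\sigma(v),\ZZ)$, exactly as in the paper. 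So the injectivity step is fine but needs no extra hypothesis, while the genericity step, which is the actual content of ($\ast$), is a genuine gap in the proposal.
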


A more complete result concerning groups of symplectic automorphisms
of deformations of ${\rm Hilb}^n(X)$ has recently been announced by Giovanni Mongardi, see
Remark \ref{rem:Mong}.

\bigskip

\noindent
{\bf Outline.}  Following \cite{AM},  a mathematician should think of  a non-linear $\sigma$-model on a K3 surface as a pair
of orthogonal positive planes $$P_1\perp P_2\subset\Lambda\otimes\RR$$ without any integral
$(-2)$-classes in $(P_1\oplus P_2)^\perp$. Here, $\Lambda:=E_8(-1)^{\oplus 2}\oplus U^{\oplus 4}$  
is the unique even, unimodular lattice of signature $(4,20)$. In fact, $\Lambda$ is isomorphic
to the Mukai lattice $\widetilde H(X,\ZZ)$ (which is nothing but $H^*(X,\ZZ)$
with the sign reversed in the pairing of $H^0$ and $H^4$) of any K3 surface $X$. 

Geometrically this comes about as follows. To any K3 surface $X$ with a K\"ahler class $\alpha\in H^{1,1}(X,\RR)$
one can naturally associate  two positive planes  $$P_X:=(H^{2,0}\oplus H^{0,2})(X)\cap H^2(X,\RR)
\text{ and }P_\alpha:=\RR\cdot\alpha\oplus \RR\cdot(1-(\alpha)^2/2)$$ in  $\widetilde H(X,\RR)$.
A $(-2)$-class $\delta\in \widetilde H(X,\ZZ)$ is orthogonal to $P_X$ if and only if $\delta$ is algebraic,
i.e.\ $\delta\in\widetilde H^{1,1}(X,\ZZ)=H^0(X,\ZZ)\oplus H^{1,1}(X,\ZZ)\oplus H^4(X,\ZZ)$.
If $\delta\in H^{1,1}(X,\ZZ)$, then $(\alpha.\delta)\ne0$ and, therefore, $\delta\not\in P_\alpha^\perp$. In fact
it can be shown that for $\alpha\in H^{1,1}(X,\ZZ)\otimes\RR$, e.g.\ $\alpha$ an ample
class, with $(\alpha)^2>2$ none of the algebraic $(-2)$-classes is orthogonal to $P_\alpha$.

Note that the positive four space $P_1\oplus P_2$ associated with  a pair of orthogonal positive planes $P_1,P_2\subset H^*(M,\RR)$
can  always be written as $\exp(B)\cdot(P_X\oplus P_\alpha)$ for some K3 surface structure $X$  on the underlying
differentiable manifold $M$ endowed with a class
$\alpha\in H^{1,1}(X,\RR)$ with $(\alpha)^2>0$ and a class $B\in H^2(X,\RR)$. Here, $\exp(B)=1+B+(B)^2/2\in H^*(X,\RR)$
acts by multiplication. See \cite[Prop.\ 3.6]{HuyCY}.

\smallskip

The main result in \cite{Gaberdiel} describes all finite subgroups $G\subset \OO(\Lambda)$
 acting trivially on $P_1\oplus P_2$ for some non-linear $\sigma$-model $P_1\perp P_2$ as above.
So the main task of this note is to  pass from this lattice theoretic condition on  finite groups 
of isometries to one that can be phrased in terms of derived categories $\Db(X)$ of complex projective
K3 surfaces. In fact \cite{Gaberdiel} also contains a more precise description of the occurring groups
which of course includes all groups of Mukai's list (as any finite group of automorphisms preserves one ample
class). But it also contains groups
of the form $(\ZZ/3\ZZ)^{\oplus 4}.A_6$, which in particular is not contained in $M_{24}$
as its order does not divide $|M_{24}|$.

\medskip

In Section \ref{sec:Hodge} we establish a bijection between groups of autoequivalences fixing a
stability condition $\sigma\in{\rm Stab}^{\rm o}(X)$ and groups of Hodge isometries of $\widetilde H(X,\ZZ)$
acting trivially on the positive four space that comes with $\sigma$.
In Section \ref{sec:Gab} we give a brief sketch of the proof of \cite{Gaberdiel}.
The final Section \ref{sec:Proof} contains the proof of Theorem \ref{thm:main}.

\medskip

\noindent
{\bf Acknowledgements.} I wish to thank the organizers of the conference `Development of Moduli Theory' in Kyoto
in June 2013 and in particular Shigeyuki Kond{\=o} for organizing the memorable event on the
occasion of the 60th birthday of Shigeru Mukai whose influence on the theory of K3 surfaces
in general and on the topics related to this article can hardly be overestimated.

I am grateful to Roberto Volpato and  Tom Bridgeland for insightful comments on preliminary versions
of this paper, to Giovanni Mongardi for the question prompting Theorem \ref{thm:main2} and to the referee for pertinent questions.

\section{Lifting Hodge isometries}\label{sec:Hodge}

We shall link Hodge isometries of the Mukai lattice $\widetilde H(X,\ZZ)$ of a complex
projective K3 surface $X$ fixing an additional positive plane in $\widetilde H^{1,1}(X,\ZZ)\otimes\RR$
to autoequivalences of $\Db(X)$ fixing a stability condition.

\subsection{}\label{sec:abstrsett}
Let $\wL$ be  the lattice $E_8(-1)^{\oplus 2}\oplus U^{\oplus 4}$ (or any
lattice of signature $(4,n)$) and consider a K3 Hodge structure on $\wL$, i.e.\
a Hodge structure of weight two given by an orthogonal decomposition
$$\wL\otimes\CC=\wL^{2,0}\oplus\wL^{1,1}\oplus\wL^{0,2}$$
such that $\Lambda^{2,0}$ is isotropic and
$(\wL^{2,0}\oplus\wL^{0,2})\cap(\wL\otimes\RR)\subset\wL\otimes\RR$ is a positive
plane.

A \emph{Hodge isometry} $\varphi:\wL\congpf\wL$ is an orthogonal transformation $\varphi\in\OO(\wL)$ such that its $\CC$-linear extension
satisfies $\varphi(\wL^{2,0})=\wL^{2,0}$. We say that $\varphi$ is \emph{symplectic} if $\varphi|_{\wL^{2,0}}={\rm id}$ and
\emph{positive} if there exists a positive plane $P\subset \wL^{1,1}\cap(\wL\otimes\RR)$ with
$\varphi|_P={\rm id}$. If $P$ is given and $\varphi|_P={\rm id}$, then $\varphi$ is called
\emph{$P$-positive}.

\subsection{}
Let now $\widetilde H(X,\ZZ)$ be the Mukai lattice of a complex K3 surface $X$ with its natural Hodge structure
given by $\widetilde H^{2,0}(X)=H^{2,0}(X)$ and $\widetilde H^{1,1}(X)=(H^0\oplus H^{1,1}\oplus H^4)(X)$. The group of all Hodge isometries
resp.\ of all symplectic Hodge isometries
 shall be denoted
$$\Aut(\widetilde H(X,\ZZ))\text{ resp. }\Aut_s(\widetilde H(X,\ZZ)).$$

For a positive plane $P\subset \widetilde H^{1,1}(X,\RR)$ we write
$$\Aut(\widetilde H(X,\ZZ),P)\subset \Aut(\widetilde H(X,\ZZ))$$ for
the subgroup of all $P$-positive Hodge isometries and $\Aut_s(\widetilde H(X,\ZZ),P)$ for its intersection
with $\Aut_s(\widetilde H(X,\ZZ))$. If $P=P_Z$ is a positive plane spanned by
${\rm Re}(Z)$ and ${\rm Im}(Z)$ of some $Z\in\widetilde H^{1,1}(X,\ZZ)\otimes\CC$, then let
$$\Aut(\widetilde H(X,\ZZ),Z):=\Aut(\widetilde H(X,\ZZ),P_Z)\subset \Aut(\widetilde H(X,\ZZ)).$$

\begin{ex}
Any K\"ahler class $\alpha\in H^{1,1}(X)$ gives rise to a po\-si\-tive plane
$P_\alpha\subset \widetilde H^{1,1}(X,\RR)$ spanned by $\alpha\in H^{1,1}(X)$ and $1-(\alpha)^2/2\in
(H^0\oplus H^4)(X,\RR)$. We write $$\Aut(\widetilde H(X,\ZZ),\alpha):=\Aut(\widetilde H(X,\ZZ),P_\alpha).$$
If  $Z=\exp(i\alpha)=1+i\alpha-(\alpha)^2/2$, then $P_\alpha=P_Z$ and
$\Aut(\widetilde H(X,\ZZ),\alpha)=\Aut(\widetilde H(X,\ZZ),Z)$.
The setting can be generalized to the positive plane generated by real and imaginary part of
$\exp(B+i\alpha)\in\widetilde H^{1,1}(X)$, where $\alpha\in H^{1,1}(X,\RR)$ with $(\alpha)^2>0$ 
and $B\in H^{1,1}(X,\RR)$. 
\end{ex}

Let $X$ be a complex K3 surface  and $f:X\congpf X$ an automorphism. If $f$ acts as ${\rm id}$ on
$H^{2,0}(X)$, then $f^*:\widetilde H(X,\ZZ)\congpf \widetilde H(X,\ZZ)$ is a symplectic Hodge isometry, i.e.\ 
$f^*\in\Aut_s(\widetilde H(X,\ZZ))$.
If $f^*\alpha=\alpha$ for a K\"ahler class $\alpha$ or, more generally, for some $\alpha\in H^{1,1}(X,\RR)$ with $(\alpha)^2>0$, then $f^*$ is
$P_\alpha$-positive, i.e.\
$f^*\in\Aut(\widetilde H(X,\ZZ),\alpha)$.

If an automorphism $f:X\congpf X$ is of finite order, then $f^*$ is always positive.
Indeed, take any K\"ahler class $\alpha\in H^{1,1}(X,\RR)$ and consider $\widetilde\alpha:=\sum_if^{i*}\alpha$. Then $\widetilde\alpha$ is  a K\"ahler class that satisfies $f^*(\widetilde\alpha)=\widetilde\alpha$ 
and hence $f^*={\rm id}$ on $P_{\widetilde\alpha}$.
So any symplectic automorphism of finite order $f:X\congpf X$ of a K3 surface $X$ gives rise to
a  positive symplectic Hodge isometry of $\widetilde H(X,\ZZ)$.

\begin{remark}\label{rem:finite}
i) As a sort of converse of the above, one observes that
$\Aut_s(\widetilde H(X,\ZZ),P)=\Aut_s(\widetilde H(X,\ZZ))\cap\Aut(\widetilde H(X,\ZZ),P)$ is a finite group:
Indeed, it is a discrete subgroup of the compact group
$$\OO(((H^{2,0}\oplus H^{0,2})(X,\RR)\oplus P)^\perp)\cong\OO(20,\RR).$$
In particular, any Hodge isometry which is symplectic and positive is automatic of finite order. In this sense,
Mukai's classification of finite groups of symplectic automorphisms of K3 surfaces is part of
the broader classification of all groups of symplectic $P$-positive Hodge isometries of $\widetilde H(X,\ZZ)$ for some
K3 surface $X$ endowed with a positive plane $P\subset \widetilde H^{1,1}(X,\RR)$.

ii) In fact, if $X$ is projective, then already $\Aut(\widetilde H(X,\ZZ),P)$ is finite. Indeed, in this case the transcendental
lattice $T(X)$ is non-degenerate and irreducible and hence $\Aut(T(X))$ is a discrete subgroup of a compact group and hence
finite. But the kernel of $\Aut(\widetilde H(X,\ZZ),P)\to\Aut(T(X))$ is contained in the finite group
$\Aut_s(\widetilde H(X,\ZZ),P)$. The finiteness of $\Aut(\widetilde H(X,\ZZ),P)$ is the analogue of the
finiteness of the group of all automorphisms $f:X\congpf X$ fixing an ample line bundle $L$ or a K\"ahler class $\alpha$.
\end{remark}
\subsection{}
Let from now on the complex K3 surface  $X$ also be  projective.  We denote its bounded derived category of coherent sheaves by
$\Db(X):=\Db({\rm Coh}(X))$. Furthermore, let $\Aut(\Db(X))$ be
the group of isomorphism classes of exact $\CC$-linear autoequivalences $\Phi:\Db(X)\congpf\Db(X)$.
To any $\Phi\in\Aut(\Db(X))$ one associates the Hodge isometry
$\varphi:=\Phi^H:\widetilde H(X,\ZZ)\congpf\widetilde H(X,\ZZ)$ which defines
a homomorphism
$$\rho:\Aut(\Db(X))\to\Aut(\widetilde H(X,\ZZ)).$$ This goes back to Mukai's article \cite{MukaiVB}, see  \cite[Ch.\ 10]{HuyFM} for further
details, references, and notations.
The image of $\rho$ is the index two subgroup
$$\Aut(\widetilde H(X,\ZZ))^+\subset\Aut(\widetilde H(X,\ZZ))$$ of Hodge isometries  preserving the (natural)
orientation of  positive  planes $P\subset\widetilde H^{1,1}(X,\RR)$, see \cite{HMS}.
We say that $\Phi$ is symplectic if $\varphi\in \Aut_s(\widetilde H(X,\ZZ))$ and we let
$$\Aut_s(\Db(X))\subset\Aut(\Db(X))$$
denote the subgroup of all symplectic autoequivalences.


In the following we shall denote by ${\rm Stab}(X)$ the space of all stability conditions $\sigma=(\kp,Z)$ on
$\Db(X)$ and by ${\rm Stab}^{\rm o}(X)\subset{\rm Stab}(X)$ the distinguished connected component introduced and studied in 
\cite{BrK3}. Tacitly, all stability conditions are required to be locally finite. For a brief survey of the main features see also \cite{HuyNewton}.

The group $\Aut(\Db(X))$ acts on ${\rm Stab}(X)$ and we let 
$$\Aut^{\rm o}(\Db(X))\subset\Aut(\Db(X))$$ be
the subgroup of all autoequivalences fixing ${\rm Stab}^{\rm o}(X)$. 
Note that it is known that the restriction $\rho:\Aut^{\rm o}(\Db(X))\twoheadrightarrow \Aut(\widetilde H(X,\ZZ))^+$ is still surjective.
Indeed, in the original argument, due to Mukai and Orlov see \cite[Ch.\ 10.2]{HuyFM}, one only has to check that universal families of
$\mu$-stable bundles preserve the distinguished component ${\rm Stab}^{\rm o}\subset{\rm Stab}$
and for this see e.g.\ \cite[Prop.\ 5.2]{HuyJAG}.

Conjecturally,
${\rm Stab}^{\rm o}(X)={\rm Stab}(X)$ or, at least, $\Aut^{\rm o}(\Db(X))=\Aut(\Db(X))$.
In fact, a proof of the conjecture for the case $\rho(X)=1$ has recently been given by
Bayer and Bridgeland \cite{BB}.
In any case, as shown in \cite{BrK3}, the group $\Aut^{\rm o}(\Db(X))\cap{\rm Ker}(\rho)$ can be identified
with the group of deck transformations of the covering
$$\pi:{\rm Stab}^{\rm o}(X)\twoheadrightarrow\kp^+_0(X),~~\sigma=(\kp,Z)\mapsto Z.$$
Here, $$\kp^+_0(X)=\kp^+(X)\setminus\bigcup_{\delta\in\Delta_X}\delta^\perp\subset 
\widetilde H^{1,1}(X,\ZZ)\otimes\CC$$ with $\kp^+(X)$ the connected component containing
$1+i\alpha-(\alpha)^2/2$ with $\alpha$ ample of the open
set $\kp(X)$ of all $Z\in\widetilde H^{1,1}(X,\ZZ)\otimes\CC$ with real and imaginary part spanning a positive plane $$P_Z:=\RR\cdot{\rm Re}(Z)\oplus\RR\cdot{\rm Im}(Z)\subset\widetilde H^{1,1}(X,\RR).$$ By $\Delta_X\subset \widetilde H^{1,1}(X,\ZZ)$ we denote the set of
all  $(-2)$-classes $\delta\in\widetilde H^{1,1}(X,\ZZ)$. Here and in the sequel, the stability function
$Z:\widetilde H^{1,1}(X,\ZZ)\to\CC$ is, via
Poincar\'e duality, identified with an element $Z\in \widetilde H^{1,1}(X,\ZZ)\otimes\CC$.


\begin{definition} An exact $\CC$-linear autoequivalence
$\Phi:\Db(X)\congpf\Db(X)$ is \emph{positive} if there exists a
stability condition $\sigma=(\kp,Z)\in{\rm Stab}^{\rm o}(X)$ with $\Phi^*\sigma=\sigma$
(and then $\Phi$ is called \emph{$\sigma$-positive}). The group of all $\sigma$-positive
autoequivalences is denoted $\Aut(\Db(X),\sigma)$.
\end{definition}

Note that in particular $\Aut(\Db(X),\sigma)\subset\Aut^{\rm o}(\Db(X))$.
\subsection{} The next proposition is a derived version 
of the Global Torelli theorem for automorphisms of polarized K3 surfaces $(X,L)$
which can be stated as
$$\Aut(X,L)\congpf \Aut(H^2(X,\ZZ),\ell)$$
for $\Aut(X,L)$ the group of automorphisms $f:X\congpf X$ with
$f^*L\cong L$  and $\Aut(H^2(X,\ZZ),\ell)$ the group of Hodge isometries
of $H^2(X,\ZZ)$ fixing the ample class $\ell:={\rm c}_1(L)$.

\begin{prop}\label{prop:liftHodge}
For $\sigma=(\kp,Z)\in{\rm Stab}^{\rm o}(X)$, the homomorphism $\rho$
  induces
isomorphisms
$$\Aut(\Db(X),\sigma)\congpf\Aut(\widetilde H(X,\ZZ),Z)$$
and
$$\Aut_s(\Db(X),\sigma)\congpf\Aut_s(\widetilde H(X,\ZZ),Z).$$
\end{prop}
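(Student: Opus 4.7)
The plan is a short diagram chase resting on two facts recalled just above the proposition: (a) the restriction $\rho\colon \Aut^{\rm o}(\Db(X))\to\Aut(\widetilde H(X,\ZZ))^+$ is surjective, and (b) its kernel coincides with the deck transformation group of the (regular) covering $\pi\colon \Stab^{\rm o}(X)\to\kp^+_0(X)$ sending $\sigma=(\kp,Z)$ to $Z$; in particular this kernel acts freely and transitively on each fiber of $\pi$. Given these two inputs, the isomorphism essentially writes itself.

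For well-definedness, let $\Phi\in\Aut(\Db(X),\sigma)$. Then $\Phi\in\Aut^{\rm o}(\Db(X))$, and applying $\pi$ to $\Phi^*\sigma=\sigma$ yields $\rho(\Phi)(Z)=Z$; in particular $\rho(\Phi)$ acts as the identity on $P_Z$, so $\rho(\Phi)\in\Aut(\widetilde H(X,\ZZ),Z)$. For surjectivity, let $\varphi\in\Aut(\widetilde H(X,\ZZ),Z)$. Since $\varphi|_{P_Z}=\id$ trivially preserves the orientation of the positive plane $P_Z\subset\widetilde H^{1,1}(X,\RR)$, we have $\varphi\in\Aut(\widetilde H(X,\ZZ))^+$, and (a) produces a lift $\Phi'\in\Aut^{\rm o}(\Db(X))$ with $\rho(\Phi')=\varphi$. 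Then $\sigma$ and $\Phi'^*\sigma$ lie in the same fiber $\pi^{-1}(Z)$, so by (b) there is a $\Psi\in\Aut^{\rm o}(\Db(X))\cap\ker\rho$ with $\Psi^*(\Phi'^*\sigma)=\sigma$. Setting $\Phi:=\Phi'\circ\Psi$ gives $\Phi^*\sigma=\sigma$ and $\rho(\Phi)=\varphi$. Injectivity is then immediate: if $\rho(\Phi)=1$ and $\Phi^*\sigma=\sigma$, then $\Phi$ is a deck transformation of $\pi$ with a fixed point, forcing $\Phi=\id$ by freeness of the deck action.

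The symplectic refinement is a free consequence, since by definition $\Phi\in\Aut_s(\Db(X))$ means precisely $\rho(\Phi)\in\Aut_s(\widetilde H(X,\ZZ))$; restricting the first isomorphism to symplectic elements on both sides gives the second. The only non-formal step in the whole argument is the orientation check in the surjectivity part, but it is essentially a tautology: any Hodge isometry that restricts to the identity on a positive 2-plane in $\widetilde H^{1,1}(X,\RR)$ automatically preserves orientation in the sense of \cite{HMS}. So I do not anticipate any real obstacle.
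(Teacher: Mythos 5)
Your argument is correct and coincides with the paper's own proof: both use the surjectivity of $\rho\colon\Aut^{\rm o}(\Db(X))\to\Aut(\widetilde H(X,\ZZ))^+$ to lift $\varphi$, correct the lift by the unique deck transformation of $\pi\colon{\rm Stab}^{\rm o}(X)\to\kp_0^+(X)$ moving $\Phi'^*\sigma$ back to $\sigma$, and get injectivity from the fact that a deck transformation with a fixed point is the identity, with the symplectic case following by restriction. The only point you elaborate beyond the paper is the (tautological, as you say) inclusion $\Aut(\widetilde H(X,\ZZ),Z)\subset\Aut(\widetilde H(X,\ZZ))^+$, which the paper also uses without comment.
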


\begin{proof}
Clearly, for $\Phi\in\Aut(\Db(X),\sigma)$ the induced Hodge isometry $\varphi:=\rho(\Phi)$ satisfies $\varphi(Z)=Z$ and
hence $\varphi\in \Aut(\widetilde H(X,\ZZ),Z)$.

To prove injectivity, let $\Phi\in\Aut(\Db(X),\sigma)$ with $\varphi={\rm id}$. Then by
\cite{BrK3}, $\Phi$ acts as a deck transformation of the covering map $\pi:{\rm Stab}^{\rm o}(X)\twoheadrightarrow\kp_0^+(X)$.
But a deck transformation that fixes a point $\sigma\in{\rm Stab}^{\rm o}(X)$ has to be the identity. Hence, $\Phi\cong{\rm id}$.

For the surjectivity, let $\varphi\in \Aut(\widetilde H(X,\ZZ),Z)$. As
$\Aut(\widetilde H(X,\ZZ),Z)\subset\Aut(\widetilde H(X,\ZZ))^+$, there exists an autoequivalence
$\Phi_0\in\Aut^{\rm o}(\Db(X))$ with $\rho(\Phi_0)=\varphi$.
Now $\sigma,\sigma_0:=\Phi_0^*\sigma\in{\rm Stab}^{\rm o}(X)$ both map to
$Z=\pi(\sigma)=\pi(\sigma_0)\in\kp_0^+(X)$ and, therefore, differ by a unique $\Psi\in{\rm Aut}^{\rm o}(\Db(X))$ with
$\rho(\Psi)={\rm id}$. But then $\Phi:=\Phi_0\circ\Psi\in{\rm Aut}^{\rm o}(\Db(X))$ satisfies
$\rho(\Phi)=\rho(\Phi_0)=\varphi$ and $\Phi^*\sigma=\sigma$, i.e.\ $\Phi\in\Aut(\Db(X),\sigma)$.

The second isomorphism  
follows  from the first.
\end{proof}

By Remark \ref{rem:finite} the proposition immediately yields the following. 
(Note that whenever $\Db(X)$  is used the surface $X$ is assumed to be projective.)
\begin{cor}
The groups $\Aut(\Db(X),\sigma)$ and $\Aut_s(\Db(X),\sigma)$ are finite.\qed
\end{cor}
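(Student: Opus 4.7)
The plan is to transfer the finiteness question from the derived category to the Mukai lattice via Proposition \ref{prop:liftHodge}, and then invoke the lattice-theoretic finiteness that has already been recorded in Remark \ref{rem:finite}. First I would apply the proposition to replace $\Aut(\Db(X),\sigma)$ and $\Aut_s(\Db(X),\sigma)$ by the isomorphic groups $\Aut(\widetilde H(X,\ZZ),Z)$ and $\Aut_s(\widetilde H(X,\ZZ),Z)$ of (symplectic) Hodge isometries of the Mukai lattice fixing the central charge $Z$ of $\sigma$.

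Next I would check that the latter groups are covered by Remark \ref{rem:finite}. Since $\sigma=(\kp,Z)$ lies in ${\rm Stab}^{\rm o}(X)$, the central charge $Z=\pi(\sigma)$ belongs to $\kp_0^+(X)\subset\kp(X)$, so by the very definition of $\kp(X)$ the real and imaginary parts of $Z$ span a positive plane $P_Z\subset\widetilde H^{1,1}(X,\RR)$. In particular $\Aut(\widetilde H(X,\ZZ),Z)=\Aut(\widetilde H(X,\ZZ),P_Z)$, and similarly for the symplectic version, so the remark applies verbatim with this choice of positive plane.

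Finally I would quote Remark \ref{rem:finite}(ii): since $X$ is projective, the transcendental lattice $T(X)$ is non-degenerate and irreducible, hence $\Aut(T(X))$ is a discrete subgroup of a compact orthogonal group and therefore finite; the restriction map $\Aut(\widetilde H(X,\ZZ),P_Z)\to\Aut(T(X))$ then has finite image, and its kernel lies inside the finite group $\Aut_s(\widetilde H(X,\ZZ),P_Z)$ from Remark \ref{rem:finite}(i) (itself discrete in $\OO(20,\RR)$). The symplectic version is then automatically finite as a subgroup of the first. I do not foresee any genuine obstacle: the only substantive hypothesis is the projectivity of $X$, which is built into the corollary and is precisely what allows one to apply Remark \ref{rem:finite}(ii) rather than merely (i); everything else is bookkeeping on top of Proposition \ref{prop:liftHodge}.
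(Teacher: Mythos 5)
Your proposal is correct and follows exactly the paper's route: the paper also deduces the corollary by combining Proposition \ref{prop:liftHodge} with the finiteness statements of Remark \ref{rem:finite}. Your additional observation that $Z\in\kp_0^+(X)$ guarantees the positivity of the plane $P_Z$ is just the bookkeeping the paper leaves implicit.
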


\begin{remark}
As was explained to me by Tom Bridgeland, the finiteness of the stabilizer $\Aut(\Db(X),\sigma)$  of a stability condition
$\sigma$
is a general phenomenon. Roughly, for any triangulated category ${\mathcal D}$ the
quotient $\Aut({\mathcal D},\sigma)/\Aut({\mathcal D},{\rm Stab}^{\rm o}({\mathcal D}))$ is finite. Here, $\Aut({\mathcal D},{\rm Stab}^{\rm o}({\mathcal D}))$ is the subgroup
of autoequivalences acting trivially on the connected component ${\rm Stab}^{\rm o}({\mathcal D})$ containing the stability condition
$\sigma$.
\end{remark}

As another consequence we find
\begin{cor} Let $\sigma=(\kp,Z)\in{\rm Stab}^{\rm o}(X)$.
Then for a group $G$ the following conditions are equivalent:\\
i) $G$ is isomorphic to a subgroup of $\Aut(\Db(X),\sigma)$
resp.\ $\Aut_s(\Db(X),\sigma)$.\\
ii) $G$ is isomorphic to a subgroup  of $\Aut(\widetilde H(X,\ZZ),Z)$
resp.\  $\Aut_s(\widetilde H(X,\ZZ),Z)$.\qed
\end{cor}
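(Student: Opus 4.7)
The plan is to deduce the corollary as an immediate formal consequence of Proposition~\ref{prop:liftHodge}. That proposition already supplies two compatible group isomorphisms
$$\rho\colon \Aut(\Db(X),\sigma) \cong \Aut(\widetilde H(X,\ZZ),Z), \qquad \rho\colon \Aut_s(\Db(X),\sigma) \cong \Aut_s(\widetilde H(X,\ZZ),Z),$$
and since an isomorphism of groups induces a bijection on the lattice of subgroups, an abstract group $G$ embeds into the left hand side if and only if it embeds into the right hand side.

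More concretely, for (i)$\Rightarrow$(ii) one takes a subgroup $H\subset\Aut(\Db(X),\sigma)$ isomorphic to $G$ and sends it to $\rho(H)\subset\Aut(\widetilde H(X,\ZZ),Z)$; injectivity of $\rho$ on the stabilizer, established in Proposition~\ref{prop:liftHodge}, ensures $\rho(H)\cong H\cong G$. For (ii)$\Rightarrow$(i) one starts with $H\subset\Aut(\widetilde H(X,\ZZ),Z)$ isomorphic to $G$ and lifts it to $\rho^{-1}(H)\subset\Aut(\Db(X),\sigma)$ using the same isomorphism. The symplectic statement is literally the same argument applied to the second isomorphism, which by construction is the restriction of the first to the symplectic subgroups, so symplectic subgroups on one side correspond to symplectic subgroups on the other.

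There is no genuine obstacle here: all the substance already sits in Proposition~\ref{prop:liftHodge}, whose proof relied on (a) the Mukai--Orlov type surjectivity $\rho\colon\Aut^{\rm o}(\Db(X))\to\Aut(\widetilde H(X,\ZZ))^+$ to produce a preliminary lift, (b) the Bridgeland covering $\pi\colon{\rm Stab}^{\rm o}(X)\to\kp_0^+(X)$ together with a deck transformation to adjust the lift so that $\sigma$ itself is preserved on the nose, and (c) triviality of deck transformations fixing a base point to get injectivity. The corollary is then the routine transport of subgroups under a group isomorphism, which is why the author stamps \qed on the statement itself.
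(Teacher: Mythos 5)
Your proposal is correct and matches the paper exactly: the corollary is stated with \qed precisely because it is the routine transport of subgroups under the two isomorphisms of Proposition \ref{prop:liftHodge}. Nothing further is needed.
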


\begin{remark} 
i) The arguments show  more generally that for any subgroup $G\subset\Aut^{\rm o}(\Db(X))$
of positive autoequivalences the
restriction $\rho:G\to \Aut(\widetilde H(X,\ZZ))$ is injective.
Indeed, $H:={\rm Ker}(\rho:G\to\Aut(\widetilde H(X,\ZZ)))$ acts trivially on $\widetilde H(X,\ZZ)$ and, therefore,
consists of deck transformations of $\pi:{\rm Stab}^{\rm o}(X)\twoheadrightarrow\kp_0^+(X)$.
However, by assumption on $G$, there exists for any $\Phi\in H\subset G$  a stability condition
$\sigma\in{\rm Stab}^{\rm o}(X)$ with $\Phi^*\sigma=\sigma$.
 Therefore, $\Phi={\rm id}$, since non-trivial deck transformations act without fixed points.

ii) In a different direction, one can generalize  to groups of auto\-equi\-valences that fix a simply connected 
open set of stability functions. Assume $G\subset\Aut(\widetilde H(X,\ZZ))^+$ is a subgroup of autoequivalences
such that there exists a contractible open set $U\subset\kp_0^+(X)$ with $\varphi(U)=U$ for
all $\varphi\in G$. Then there exists a non-canonical group homomorphism lifting the inclusion 
$$\xymatrix{&\raisebox{1mm}{$G$}\ar@{^(->}[d]\ar@{-->}[dl]\\
\Aut^{\rm o}(\Db(X))\ar@{->>}[r]&\Aut(\widetilde H(X,\ZZ))^+.
}$$

Indeed, since $U$ is simply connected, any connected component 
$U'\subset\pi^{-1}(U)\subset{\rm Stab}^{\rm o}(X)$  of $\pi^{-1}(U)$ maps homeo\-morphically onto $U$. Pick one $U'\subset\pi^{-1}(U)$
and argue as above: For any $\varphi\in G$, there exists $\Phi_0\in\Aut^{\rm o}(\Db(X))$ with $\rho(\Phi_0)=\varphi$.
Pick $\sigma=(\kp,Z)\in U'$. Then $\pi(\Phi_0^*\sigma)\in U$ and thus there exists
a unique $\Psi\in{\rm Aut}^{\rm o}(\Db(X))$ with $\rho(\Psi)={\rm id}$ and $\Psi^*\Phi_0^*\sigma\in U'$. The new
$\Phi:=\Phi_0\circ\Psi\in{\rm Aut}^{\rm o}(\Db(X))$ satisfies
$\rho(\Phi)=\rho(\Phi_0)=\varphi$ and $\Phi^*(U')=(U')$. Moreover, $\Phi$ is unique with these
properties and we define the lift $G\to\Aut^{\rm o}(\Db(X))$ by $\varphi\mapsto\Phi$.
To see that this defines a group homomorphism consider $\varphi_1,\varphi_2\in G$
and let $\varphi_3:=\varphi_1\circ\varphi_2$.
For the (unique) lifts $\Phi_i$ of $\varphi_i$ with $\Phi_i^*(U')=U'$ the composition $\Psi:=\Phi_3^{-1}\circ(\Phi_1\circ\Phi_2)$ satisfies
$\rho(\Psi)={\rm id}$ and $\Psi^*(U')=U'$. Hence, $\Psi={\rm id}$ and, therefore, $\Phi_3=\Phi_1\circ\Phi_2$.
\end{remark}

\begin{remark}
In \cite{HuyCY} the notion of generalized K3 structures on the differentiable manifold $M$ underlying a 
K3 surface was introduced as an orthogonal pair of generalized Calabi--Yau structures $\varphi,\varphi'\in\ka^{2\ast}_\CC(M)$. The 
period of a generalized K3 structure was defined as the pair of positive planes $P_\varphi,P_{\varphi'}\subset \widetilde H(M,\RR)$ spanned
by real and imaginary parts of the cohomology classes $[\varphi]$ resp.\ $[\varphi']$. Isomorphisms
between generalized K3 structures include ${\rm Diff}(M)$ and $B$-field twists $\exp(B)$ by closed forms
$B\in\ka^2(M)$ with integral cohomology class $\beta:=[B]\in H^2(M,\ZZ)$.  Note that ${\rm Diff}(M)$ surjects
onto the index two subgroup $\OO(H^2(M,\ZZ))^+$ and that $\OO(\widetilde H(M,\ZZ))$ is generated
by $\OO(H^2(M,\ZZ))$, $\exp(\beta)$ with $\beta\in H^2(M,\ZZ)$, and $\OO((H^0\oplus H^4)(M,\ZZ))$.
(Only the latter has no interpretation on the level of forms.)
In this sense $\Aut_s(\widetilde H(X,\ZZ),Z)$ may be seen as the group of automorphisms
of the generalized K3 structure given by $\varphi_1=\omega$, a holomorphic two-form on $X$,
and $\varphi_2=\exp(B+i\alpha)\in\ka^{2\ast}_\CC(M)$ representing $Z$.
So Proposition \ref{prop:liftHodge} seems to suggests that automorphisms of $(\varphi_1,\varphi_2)$ can be interpreted
as automorphisms of a stability condition $\sigma$ on $\Db(X)$. It would be interesting to find a more direct approach
to this not relying on the period description of both sides.
\end{remark}
\section{Groups of Hodge isometries as subgroups of $\Co_1$}\label{sec:Gab}
For the convenience of the reader, we recall the lattice theoretic arguments in \cite{Gaberdiel}
which relate groups of positive symplectic Hodge isometries with subgroups of the Conway group  $\Co_1$.
Section \ref{sec:i-ii} is later used  in Section \ref{sec:Proof} to prove that groups of symplectic $\sigma$-positive autoequivalences
can be realized as subgroups of $\Co_1$, whereas the converse is based on Section \ref{sec:ii-i}. 

\subsection{}
We go back to the abstract setting of Section \ref{sec:abstrsett}
and let $\Lambda=E_8(-1)^{\oplus 2}\oplus U^{\oplus 4}$. We also
fix   a positive subspace  of dimension four $\Pi\subset\Lambda\otimes\RR$ such that
no $(-2)$-class $\delta\in\Lambda$ is contained in $\Pi^\perp$.

Then consider the subgroup
$$\Aut(\Lambda,\Pi)\subset\OO(\Lambda)$$
of all isometries $\varphi:\Lambda\congpf \Lambda$ such that its $\RR$-linear extension
satisfies $\varphi={\rm id}$ on $\Pi$. Thus, $\Aut(\Lambda,\Pi)\subset\OO(\Pi^\perp)$ and
since $\Pi^\perp$ is negative definite and hence $\OO(\Pi^\perp)$ compact, $\Aut(\Lambda,\Pi)$ is finite.
For a subgroup $G\subset\Aut(\Lambda,\Pi)$ we denote by
$$\Lambda^G\text{ and } \Lambda_G:=(\Lambda^G)^\perp$$
the invariant part resp.\ its orthogonal complement.
The group $G$ can be chosen arbitrary and one can even take $G=\Aut(\Lambda,\Pi)$.

Following the classical line of arguments, one first proves

\begin{lem}\label{lem:stand}
The lattice $\Lambda_G$ is negative definite of rank $\rk\,\Lambda_G\leq 20$ and does not contain any $(-2)$-classes.
The induced action of $G$ on its discriminant $A_{\Lambda_G}=\Lambda_G^*/\Lambda_G$ is trivial and
the minimal number of generators  of $A_{\Lambda_G}$ is bounded by
$\ell(A_{\Lambda_G})\leq 24-\rk\,\Lambda_G$.
\end{lem}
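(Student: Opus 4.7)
The plan is to deduce everything in sequence from the definition of $\Lambda^G$ together with the standard discriminant-form calculus for primitive sublattices of an even unimodular lattice.

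First I would observe that, by the definition of $\Aut(\Lambda,\Pi)$, every $\varphi\in G$ fixes $\Pi$ pointwise, so $\Pi\subset \Lambda^G\otimes\RR$. Taking orthogonal complements inside $\Lambda\otimes\RR$ yields $\Lambda_G\otimes\RR\subset\Pi^\perp$. Since $\Lambda$ has signature $(4,20)$ and $\Pi$ is a positive four-space, $\Pi^\perp$ is negative definite of dimension $20$. Consequently $\Lambda_G$ is negative definite and $\rk\,\Lambda_G\leq 20$. Moreover, by the assumption on $\Pi$, no $(-2)$-class of $\Lambda$ lies in $\Pi^\perp$, and in particular none lies in $\Lambda_G$.

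Next I would note that the fixed lattice $\Lambda^G\subset\Lambda$ is primitive: if $nv\in\Lambda^G$ for some $v\in\Lambda$ and integer $n\neq0$, then for every $\varphi\in G$ one has $n\varphi(v)=\varphi(nv)=nv$, hence $\varphi(v)=v$ and $v\in\Lambda^G$. Thus $\Lambda^G$ and $\Lambda_G=(\Lambda^G)^\perp$ are mutually orthogonal primitive sublattices of the even unimodular lattice $\Lambda$. The standard gluing argument (Nikulin) then produces a canonical $G$-equivariant anti-isometry of discriminant forms
$$\gamma\colon A_{\Lambda^G}\isomor A_{\Lambda_G}.$$
Since $G$ acts as the identity on $\Lambda^G$ by definition, it acts trivially on $A_{\Lambda^G}$, and hence, via $\gamma$, trivially on $A_{\Lambda_G}$ as well.

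Finally, for the length bound I would combine $\gamma$ with the elementary inequality $\ell(A_L)\leq \rk L$, valid for any lattice $L$ (the quotient $L^*/L$ is generated by the images of a basis of $L^*$). Using $\rk\,\Lambda=24$, this gives
$$\ell(A_{\Lambda_G})=\ell(A_{\Lambda^G})\leq \rk\,\Lambda^G = 24-\rk\,\Lambda_G,$$
as required. I do not anticipate any serious obstacle here: once the pointwise fixation of $\Pi$ has placed $\Lambda_G$ inside the negative definite lattice $\Pi^\perp$, the remaining assertions are a direct application of Nikulin's discriminant-form technology for primitive embeddings into a unimodular lattice.
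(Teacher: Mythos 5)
Your proof is correct and takes essentially the same route as the paper: the inclusion $\Lambda_G\subset\Pi^\perp$ gives negative definiteness, the rank bound and the absence of $(-2)$-classes, and the Nikulin glue isomorphism $A_{\Lambda^G}\cong A_{\Lambda_G}$ (equivariant for $G$) transfers triviality of the action and yields $\ell(A_{\Lambda_G})=\ell(A_{\Lambda^G})\leq\rk\,\Lambda^G=24-\rk\,\Lambda_G$. The only point you leave tacit is the non-degeneracy of $\Lambda^G$ needed to invoke the gluing (the paper flags it as ``easy to check''); it follows, for instance, because a radical vector of $\Lambda^G$ would lie in $\Lambda^G\cap\Lambda_G$ and hence be an isotropic vector of the negative definite lattice $\Lambda_G$, or alternatively by averaging over the finite group $G$.
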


\begin{proof} 
As $\Lambda_G\subset\Pi^\perp$, the assumption on $\Pi$ implies that the lattice
$\Lambda_G$  does not contain any $(-2)$-class. Moreover, 
since $\Pi^\perp\subset\Lambda\otimes\RR$ is a negative definite
subspace of dimension $\leq 20$,
also $\Lambda_G$ is negative definite and of rank $\rk\,\Lambda_G\leq 20$. As
$\Lambda$ is unimodular and, as is easy to check, also $\Lambda^G$ is non-degenerate,  there exists an isomorphism $A_{\Lambda_G}\cong A_{\Lambda^G}$ which is compatible
with the induced action of $G$. 
Since the action on the latter is trivial, it is so on $A_{\Lambda_G}$.
The isomorphism also yields $\ell(A_{\Lambda_G})=\ell(A_{\Lambda^G})\leq\rk\,\Lambda^G=\rk\,\Lambda-\rk\,\Lambda_G$.
The arguments are quite standard, for more details see \cite{HK3} and references therein.
\end{proof}

The key idea in \cite{Kondo} is to embed $\Lambda_G$ (or rather $\Lambda_G\oplus A_1(-1)$) into
some Niemeier lattice, i.e.\ into one of the $24$ negative definite, even, unimodular lattices of rank $24$.
In our situation, Kond{\=o}'s approach is easy to adapt if the stronger inequality
\begin{equation}\label{ass:sup}
\ell(A_{\Lambda_G})<24-\rk\,\Lambda_G,
\end{equation} 
  is assumed. Indeed,
then by \cite[Thm.\ 1.12.2]{NikulinInt} there exists a primitive embedding
$$\Lambda_G\,\hookrightarrow N_i$$
into one of the Niemeier lattices $N_i$. Moreover, as $G$ acts trivially on $A_{\Lambda_G}$, its
action on $\Lambda_G$ can be extended to an action of $G$ on $N_i$ which is trivial
on $\Lambda_G^\perp\subset N_i$, see \cite[Thm.\ 1.6.1, Cor.\ 1.5.2]{NikulinInt}. In Kond{\=o}'s approach $\Lambda_G\oplus A_1(-1)$ is embedded
into a Niemeier lattice $N_i$. This excludes $N_i$ from being the Leech lattice $N$ which does not contain any $(-2)$-class.
So, under the additional assumption (\ref{ass:sup}), the group $G$ can be realized as a subgroup of $\OO(N_i)$
of a certain Niemeier lattice different from the Leech lattice. 
If $N_i$ is the Niemeier lattice with root lattice $A_1(-1)^{\oplus 24}$, this eventually
leads to an embedding $G\,\hookrightarrow M_{24}$.  Recall that in this case
$\OO(N_i)\cong M_{24}\ltimes (\ZZ/2\ZZ)^{\oplus 24}$.
The $(\ZZ/2\ZZ)^{\oplus 24}$ is avoided by $G$, as $\Lambda_G$ does not contain any $(-2)$-classes. Indeed, if $g(e_i)=-e_i$ for
some $g\in G$ and a root $e_i$, then $e_i$ would be orthogonal to $\Lambda^G$ and hence contained in $\Lambda_G$.
However, if $N_i$ is the Leech lattice $N$, then one only gets an embedding  into the much larger Conway group $\Co_0$
$$G\,\hookrightarrow \OO(N)=:\Co_0.$$
Note that in both cases, the invariant lattice $N_i^G$ satisfies $\rk\,N_i^G\geq4$.

By a clever twist of the argument, the authors of \cite{Gaberdiel} manage to prove
the existence of an embedding into the Leech lattice only assuming the weak inequality in (\ref{ass:sup}) which holds
due to Lemma \ref{lem:stand}.

\begin{prop}\label{prop:Gab}
{\bf (Gaberdiel, Hohenegger, Volpato)}
For a group $G$ the following conditions are equivalent:\\
i) $G$ is isomorphic to a subgroup of $\Aut(\Lambda,\Pi)$ for some
positive four space $\Pi\subset \Lambda\otimes\RR$ without $(-2)$-class
contained in $\Pi^\perp$. \\
ii) $G$ is isomorphic to a subgroup of the Conway group $\Co_0$
 with invariant lattice of rank
at least four.
\end{prop}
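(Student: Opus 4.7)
The plan is to prove the two implications using Nikulin's lattice-embedding machinery, with the subtle point being that in the harder direction one must force the unimodular ambient lattice of rank $24$ to be the Leech lattice itself and not one of the other $23$ Niemeier lattices.

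For (ii)$\,\Rightarrow\,$(i), I would start with $G\subset\Co_0=\OO(N)$ satisfying $\rk N^G\geq 4$ and set $N_G:=(N^G)^\perp\subset N$. Then $N_G$ is negative definite of rank at most $20$, contains no $(-2)$-classes (since $N$ has none), and $G$ acts trivially on $A_{N_G}\iso A_{N^G}$ because $N$ is unimodular; in particular $\ell(A_{N_G})\leq\rk N^G=24-\rk N_G$. Since $\Lambda$ has signature $(4,20)$, \cite[Thm.\ 1.12.2]{NikulinInt} produces a primitive embedding $N_G\hookrightarrow\Lambda$, and by \cite[Cor.\ 1.5.2]{NikulinInt} the $G$-action on $N_G$ extends to $\Lambda$ acting trivially on $M:=N_G^\perp\subset\Lambda$, a sublattice of signature $(4,20-\rk N_G)$. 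A $G$-fixed positive four-space $\Pi$ must lie inside $M\otimes\RR$; for each of the countably many $(-2)$-classes $\delta\in\Lambda$, primitivity of $N_G\hookrightarrow\Lambda$ together with the absence of $(-2)$-classes in $N_G$ shows that the projection of $\delta$ to $M\otimes\RR$ is nonzero, so $\{\Pi:\delta\in\Pi^\perp\}$ is a proper closed subset of the Grassmannian of positive four-spaces in $M\otimes\RR$. A Baire-generic $\Pi$ then satisfies (i).

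For (i)$\,\Rightarrow\,$(ii), Lemma \ref{lem:stand} provides $\Lambda_G$ negative definite of rank $\leq 20$, containing no $(-2)$-classes, with $G$ acting trivially on its discriminant, and with $\ell(A_{\Lambda_G})\leq 24-\rk\Lambda_G$. I would aim for a primitive embedding $\Lambda_G\hookrightarrow N$ into the Leech lattice and extend the $G$-action trivially on $N^G:=\Lambda_G^\perp\subset N$; this realizes $G\hookrightarrow\OO(N)=\Co_0$ with $\rk N^G=24-\rk\Lambda_G\geq 4$, as required.

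The main obstacle is that Nikulin's existence theorem only guarantees a primitive embedding of $\Lambda_G$ into \emph{some} even unimodular negative definite lattice of rank $24$, i.e.\ into some Niemeier lattice $N_i$, without identifying which one. Kond{\=o}'s classical trick of passing to $\Lambda_G\oplus A_1(-1)$ (which carries a root and so cannot embed into $N$) forces $N_i\neq N$, but the enlargement requires the \emph{strict} inequality $\ell(A_{\Lambda_G})<24-\rk\Lambda_G$, which Lemma \ref{lem:stand} does not give. The hard step is therefore the boundary case $\ell(A_{\Lambda_G})=24-\rk\Lambda_G$: my plan would be to sweep through the classification of the $23$ non-Leech Niemeier lattices and rule each one out by showing that any primitive embedding $\Lambda_G\hookrightarrow N_i$ must interact with the non-trivial root system $R(N_i)\subset N_i$, producing either a $(-2)$-class in $\Lambda_G$ or a non-trivial action of $G$ on the discriminant $A_{\Lambda_G}$---both of which are excluded by Lemma \ref{lem:stand}. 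Executing this case-by-case exclusion uniformly at the boundary of the inequality is the clever twist due to \cite{Gaberdiel} and is the genuine technical obstacle in the proof.
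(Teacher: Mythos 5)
Your reduction of (i)$\Rightarrow$(ii) to a case sweep over the $23$ non-Leech Niemeier lattices is where the proposal breaks down, and it breaks in two ways. First, in the boundary case $\ell(A_{\Lambda_G})=24-\rk\Lambda_G$, Nikulin's Thm.\ 1.12.2 does not guarantee a primitive embedding of $\Lambda_G$ into \emph{any} negative definite even unimodular lattice of rank $24$: one needs strictly smaller length at each odd prime and a splitting condition on the $2$-part of the discriminant form, and there is no reason these hold. So the case analysis has no embedding to start from. Second, the proposed exclusions are not available: a lattice with the properties of Lemma \ref{lem:stand} can perfectly well embed primitively into a Niemeier lattice with non-trivial root system while containing no roots and with $G$ acting trivially on the discriminant --- this is exactly what happens in Kond{\=o}'s argument \cite{Kondo} for Mukai's groups --- so no contradiction can be extracted, and even if the other $N_i$ could be excluded, that would not by itself produce the required embedding into $N$. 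The actual twist of \cite{Gaberdiel}, which the paper follows, is different: apply Nikulin to $\Lambda_G'=\Lambda_G\oplus A_1(-1)$ to get a primitive embedding into the Lorentzian even unimodular lattice $\Gamma={\rm II}_{1,25}\cong N\oplus U$ of rank $26$ (here the weak inequality suffices), extend $G$ by the identity on $\Lambda_G^\perp\subset\Gamma$, note that $\Gamma^G$, of signature $(1,25-\rk\Lambda_G)$, meets the interior of a chamber of the positive cone since otherwise some $(-2)$-class would lie in $\Lambda_G$, and then use the Conway--Borcherds structure of $\OO(\Gamma)$: the Weyl group is generated by reflections in Leech roots \cite{CS}, so after conjugating one may assume $G$ preserves the fundamental chamber $\kc_0$; its stabilizer $\Co_\infty$ fixes the Weyl vector $w$ \cite{Bor}, hence $w\in\Gamma^G$, and $\Lambda_G\,\hookrightarrow w^\perp\twoheadrightarrow w^\perp/\ZZ\cdot w\cong N$ is the desired primitive embedding into the Leech lattice, with $\rk N^G=24-\rk\Lambda_G\geq4$ by Lemma \ref{lem:stand}. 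Without this (or an equivalent mechanism) your hard direction does not go through.

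There is also a smaller gap in your (ii)$\Rightarrow$(i): you invoke Thm.\ 1.12.2 for $N_G\,\hookrightarrow\Lambda$ using only $\ell(A_{N_G})\leq\rk N^G=24-\rk N_G$, but this runs into the same boundary problem --- when $\ell(A_{N_G})$ equals $\rk\Lambda-\rk N_G$ (and when $\rk N_G=20$), Nikulin's theorem demands additional prime-by-prime conditions that you have not verified. The paper sidesteps this by applying Thm.\ 1.12.4 to $N^G$, embedding it primitively into $E_8(-1)\oplus U^{\oplus \rk N^G-4}$ (of rank $2\,\rk N^G$, so no discriminant hypotheses are needed) and feeding the orthogonal complement into Prop.\ 1.5.1 to produce the primitive embedding $N_G\,\hookrightarrow\Lambda$. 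The remainder of your argument in this direction --- extending the $G$-action by the identity on $N_G^\perp$ and choosing $\Pi\subset N_G^\perp\otimes\RR$ generically so that $\Pi^\perp$ contains no $(-2)$-class, using that no $(-2)$-class of $\Lambda$ projects to zero in $N_G^\perp\otimes\RR$ --- agrees with the paper.
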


For completeness sake, we sketch the argument. Section \ref{sec:i-ii} shows that i) implies ii) and  Section \ref{sec:ii-i} deals with the converse. We follow the original \cite{Gaberdiel} closely.

\subsection{}\label{sec:i-ii} We shall show that there  exists a primitive embedding
$$\Lambda_G\,\hookrightarrow N$$ into the Leech lattice $N$ inducing
an inclusion $$G\,\hookrightarrow \Co_0$$
with $\rk\,N^G\geq4$.

If $\ell(A_{\Lambda_G})=24-\rk\,\Lambda_G$,
then an embedding into a Niemeier lattice can be found if for
odd prime $p$  the $p$-Sylow group $(A_{\Lambda_G})_p$ of $A_{\Lambda_G}$ satisfies
the stronger inequality $\ell((A_{\Lambda_G})_p)<24-\rk\,\Lambda_G$
and for $p=2$ the discriminant form $(A_{\Lambda_G},q)$ splits off  $(A_{A_1},q)$, see \cite[Thm 1.12.2]{NikulinInt}.
Of course, if $\Lambda_G$ splits off a summand $A_1(-1)$ both conditions are satisfied,
but in general it seems difficult to check.

Instead, in \cite{Gaberdiel} Nikulin's criterion is applied to $\Lambda_G':=\Lambda_G\oplus A_1(-1)$. This is of course
inspired by Kond{\=o}'s original argument, but unfortunately in the present situation one cannot hope for embeddings of
$\Lambda_G'$ into a Niemeier lattice. Instead, one obtains primitive
embeddings $$\Lambda_G\,\hookrightarrow \Lambda_G'\,\hookrightarrow \Gamma: =E_8(-1)^{\oplus 3}\oplus U.$$
Note that  $\Gamma$ is the unique even unimodular lattice of signature
$(1,25)$ which is often denoted ${\rm II}_{1,25}$. As $G$ acts trivially on $A_{\Lambda_G}$, the action on $\Lambda_G$ can be extended to $\Gamma$ 
by ${\rm id}$ on $\Lambda_G^\perp\subset \Gamma$.
Note that then $\Gamma^G=\Lambda_G^\perp$ is a non-degenerate lattice of signature $(1,25-\rk\, \Lambda_G)$.
In particular, $\Gamma^G\otimes\RR$ intersects the positive cone $\kc\subset\Gamma\otimes\RR$.
More precisely,  $\Gamma^G\otimes\RR$ intersects
one of the chambers of $\kc$, defined as usual by means of the set of all $(-2)$-classes $\Delta_\Gamma\subset\Gamma$.
Indeed, otherwise there exists one $(-2)$-class $\delta\in \Gamma$ with $\Gamma^G\subset\delta^\perp$
which would yield the contradiction $\delta\in \Lambda_G$.

Next choose an isomorphism  $\Gamma\cong N\oplus U$, where $N$ is the Leech lattice, and
consider a standard generator of $U$ as an isotropic vector $w\in \Gamma$ (the \emph{Weyl vector}). Then the
$(-2)$-classes $\delta$ with $(\delta.w)=1$ are called \emph{Leech roots}. The Weyl group $W\subset\OO(\Gamma)$
is in fact generated by the reflections $s_\delta$ associated with Leech roots (see \cite[Ch.\ 27]{CS}) or, equivalently,
there exists one chamber $\kc_0\subset\kc$ that is described by the condition
$(\delta.\kc_0)>0$ for all Leech roots $\delta$.

Thus, after applying elements of the Weyl group $W$ to the embedding $\Lambda_G\,\hookrightarrow \Gamma$ if necessary,
one can assume that  the distinguished chamber $\kc_0$ is fixed by $G$. Then
$G$ is contained in the subgroup $\Co_\infty\subset\OO(\Gamma)$  of all isometries fixing $\kc_0$. 
The group $\Co_\infty$ is also known to fix the isotropic
vector $w\in\Gamma$ (see \cite{Bor}) and hence $w\in\Gamma^G$. One obtains a
primitive embedding of $\Lambda_G\,\hookrightarrow N$ as the composition
$$\Lambda_G\,\hookrightarrow w^\perp\twoheadrightarrow  w^\perp/\ZZ\cdot w\cong N.$$

Finally, by using $G\subset \Co_\infty\to\Co_0=\OO(N)$ (or by applying Nikulin's general result once more)
one extends the action of $G$ from $\Lambda_G$ to $N$ by the identity on $\Lambda_G^\perp\subset N$.
Then $\Lambda_G^\perp\subset N^G$ (in fact, equality holds)
and, by Lemma \ref{lem:stand}, we ensure $\rk\,N^G\geq4$:

So we proved that  i) implies ii).

\subsection{}\label{sec:ii-i}
For the  converse of Proposition \ref{prop:Gab}, let $G\subset\Co_0$ be a subgroup with $\rk\,N^G\geq4$. One needs to show that
then $G\subset\Aut(\Lambda,\Pi)$ for some positive four space $\Pi\subset\Lambda\otimes\RR$
without $(-2)$-classes contained in $\Pi^\perp$. This is proved in \cite{Gaberdiel} as follows: 

Firstly, one shows the existence of a primitive embedding
\begin{equation}\label{eqn:Cogo}
N_G=(N^G)^\perp\,\hook \Lambda=E_8(-1)^{\oplus 2}\oplus U^{\oplus 4}.
\end{equation}
Such an embedding exists if there exists an orthogonal lattice, i.e.\
an even lattice $M$ with signature $(4,20-\rk\, N_G)$ and discriminant form
$(A_M,q_M)\cong (A_{N_G},-q_{N_G})$, see \cite[Prop.\ 1.5.1]{NikulinInt}. But \cite[Thm.\ 1.12.4]{NikulinInt} implies the existence
of a primitive embedding $N^G\,\hook E_8(-1)\oplus U^{\oplus \rk\, N^G-4}$ and
its orthogonal complement $M$ has the required properties.

Secondly, as $G$ acts trivially on $A_{N_G}\cong A_{N^G}$, its action on $N_G$ can be extended by
${\rm id}$ to $\Lambda$. The orthogonal $N_G^\perp\subset \Lambda$ has signature $(4,20-\rk\,N_G)$
and, therefore, $N_G^\perp\otimes\RR$ contains a positive four space $\Pi$ on which $G$ acts trivially

Thirdly, $N_G$ as a sublattice of the Leech lattice $N$ does not contain any $(-2)$-classes and
for generic choice of $\Pi\subset N_G^\perp\otimes\RR$ neither does $\Pi^\perp$.

\medskip

This concludes the proof of Proposition \ref{prop:Gab}.\qed

\section{Proof of Theorem \ref{thm:main}}\label{sec:Proof}

Combining the previous two sections one can now complete the proof of Theorem \ref{thm:main}.

\subsection{}\label{sec:proofi-ii}
The proof of one direction of Theorem \ref{thm:main} is easy. Indeed, if $G\subset\Aut_s(\Db(X),\sigma)$ for some
$\sigma\in{\rm Stab}^{\rm o}(X)$, then  $G\subset\Aut_s(\widetilde H(X,\ZZ),Z)$ by Proposition \ref{prop:liftHodge}.
If we define $\Pi_{X,Z}$ as the positive four space $$\Pi_{X,Z}:=P_X\oplus P_Z=(H^{2,0}\oplus H^{0,2})(X,\RR)\oplus\RR\cdot{\rm Re}(Z)\oplus \RR\cdot{\rm Im}(Z)$$ and view it as a subspace of $\Lambda\otimes\RR\cong \widetilde H(X,\RR)$, then
$$G\subset\Aut_s(\widetilde H(X,\ZZ),Z)\cong\Aut(\Lambda,\Pi_{X,Z}).$$
Thus, the discussion in Section \ref{sec:i-ii} and Proposition \ref{prop:Gab} apply and show that there exists an 
injection $G\,\hookrightarrow\Co_1$ with invariant lattice of rank at least four. 

\begin{remark}
%
Whenever $\Lambda_G$ can be embedded into a Niemeier lattice
$N_i$ that is not the Leech lattice, then one can argue as in \cite{Kondo} and
deduce the existence of an embedding $G\,\hookrightarrow M_{24}$.
But unfortunately, the Leech lattice cannot be excluded and
one really has to deal with $\Co_1$. Concrete examples have been given
in \cite{Gaberdiel}.
\end{remark}
\subsection{} For the proof of the converse,
an additional problem has to be addressed that was not present in \cite{Gaberdiel}:
One needs to ensure that $\Pi$ in Section \ref{sec:ii-i} can be chosen of the form
$\Pi_{X,Z}$ with $X$ projective and $Z\in H^{1,1}(X,\ZZ)\otimes\CC$.

To achieve this, fix an isomorphism $\Lambda\cong\Lambda_0\oplus U_0$, where we think
of $\Lambda_0=E_8(-1)^{\oplus 2}\oplus U^{\oplus 3}$ as the K3 lattice $H^2(Y,\ZZ)$ and of $U_0\cong
U$ as $(H^0\oplus H^4)(Y,\ZZ)$.
For a subgroup $G\subset\Co_0$ with $\rk\, N^G\geq4$ choose a primitive embedding $N_G\,\hook\Lambda$
as in (\ref{eqn:Cogo}). 


Now, for an arbitrary positive definite primitive sublattice
$L\subset N_G^\perp\subset \Lambda$ of rank four, the lattice 
$L\cap \Lambda_0$, which is the kernel of the projection $L\to U_0$, is of rank at least two.
Hence,  there exists a positive sublattice $L_1\subset L\cap \Lambda_0$ of rank two. We
let $P_1:=L_1\otimes \RR$ be the associated
 positive plane. Due to the surjectivity of the period map, there exists
a K3 surface $X$ with a marking $H^2(X,\ZZ)\cong\Lambda_0$
inducing $(H^{2,0}\oplus H^{0,2})(X,\RR) \cong P_1$. As  the lattice $L_1^\perp\cap\Lambda_0$, which
has signature $(1,19)$, is contained
in $P_1^\perp\subset\Lambda_0\otimes\RR$, there exists a class $\alpha\in H^{1,1}(X,\ZZ)$ with
$(\alpha)^2>0$ and, therefore, $X$ is projective. (In other words, any K3 surface of maximal Picard number $20$ is  projective.)

It remains to find a second positive plane $P_2\subset P_1^\perp\cap(N_G^\perp\otimes \RR)$ 
such that $(P_1\oplus P_2)^\perp$ does not contain any $(-2)$-class. In fact, any such $P_2$
contains elements with non-trivial $H^0$ component and is, therefore, of the form
$P_Z$ for some  $Z=\exp(B+i\alpha)$ with $B,\alpha\in H^{1,1}(X,\ZZ)\otimes\RR$. Thus,
$\Pi:=P_1\oplus P_2$ is of the form $\Pi_{X,Z}$ as required.

In order to show the existence of $P_2$, observe 
that if the intersection of the usual period domain $Q\subset\PP(L_1^\perp\otimes\CC)$
with $\PP((L_1^\perp \cap N_G^\perp)\otimes\CC)$ is contained in the union of all
hyperplanes $\delta^\perp$ orthogonal to some $(-2)$-class $\delta\in L_1^\perp$, then
there exists in fact one $\delta\in L_1^\perp$ orthogonal to $N_G^\perp$.
But this would imply that $N_G$ contains a $(-2)$-class which is absurd for a sublattice
of the Leech lattice.


This concludes the proof of Theorem \ref{thm:main}.\qed

It might be worth pointing out,  that the K3 surfaces constructed in the proof above have all maximal
Picard number $\rho(X)=20$. However, that any group that can be realized at all can also be realized on
one of this type, can also be proved directly. 




%
%
%
%

\section{Symplectic automorphisms of deformations of Hilbert schemes}\label{sec:Aut}

Let $G\subset \Co_0={\rm O}(N)$ be a subgroup with $\rk\,N^G\geq 4$ and choose as before a primitive embedding
$$N_G:=(N^G)^\perp\,\hookrightarrow \Lambda$$ into the extended K3 lattice 
$$\Lambda:=E_8(-1)^{\oplus 2}\oplus U^{\oplus 4}.$$
We also fix a decomposition  $\Lambda =\Lambda_0\oplus U_0$, into the K3 lattice
$\Lambda_0:=E_8(-1)^{\oplus 2}\oplus U^{\oplus 3}$ and a copy $U_0$ of $U$.
By $N_G^\perp\subset \Lambda$ we denote the orthogonal complement of $N_G$ in
$\Lambda$. Then $N_G^\perp$ has
$\rk(N_G^\perp)\geq 4$, more precisely it is a lattice of signature $(4,m)$, and
the action of $G$ on $N_G$ can be extended by ${\rm id}$ on $N_G^\perp$ to an action of $\Lambda$.

In order that a given $G$ can act on a deformation of ${\rm Hilb}^n(X)$ it needs to satisfy an additional
condition:
\smallskip

($\ast$) The lattice $N_G^\perp$ contains a primitive
positive definite lattice $L\subset N_G^\perp$ with $$\ell(L)<\rk(L)=3.$$

\smallskip

We can now state the theorem from the introduction in the following precise form.

\smallskip

{\bf Theorem \ref{thm:main2}.}  \emph{If $G\subset\Co_0$ with $\rk(N^G)\geq 4$
satisfies ($\ast$), then there exist an $n>0$ and a projective irreducible symplectic variety $Y$ deformation equivalent
to ${\rm Hilb}^n(X)$ of a K3 surface $X$ such that $G$ is isomorphic to a subgroup
of the group of symplectic automorphisms ${\rm Aut}_{\rm s}(Y)$ of $Y$.}

\smallskip

\begin{proof}
Consider a primitive, positive definite lattice of rank three $L\subset N_G^\perp$.
Choose bases of $L$ and of its dual $L^*$ such that the natural inclusion
$i\colon L\,\hookrightarrow L^*$ is given by a diagonal matrix ${\rm diag}(a_1,a_2,a_3)$
with $a_i|a_{i+1}$. Then $\ell(L)<3$ if and only of $a_1=\pm1$. 
So, by assumption we can assume that $L\subset N_G^\perp$ contains
a vector $0\ne v\in L$ which is primitive in the overlattice $L\subset L^*$.

Next, we shall apply \cite[Thm.\ 1.14.4]{NikulinInt} twice to $L_1:=v^\perp\cap L$.
The first time, to ensure that there exists a primitive embedding $L_1\,\hookrightarrow \Lambda_0$,
as $\ell(L_1)+2\leq 4\leq\rk(\Lambda_0)-\rk(L_1)=20$, and a second time
to conclude that the induced embedding $L_1\,\hookrightarrow\Lambda_0\,\hookrightarrow\Lambda$
is unique up to ${\rm O}(\Lambda)$. Hence, the given embedding $L_1\subset L\subset N_G^\perp\subset\Lambda$ can be modified by some $\varphi\in{\rm O}(\Lambda)$ such that $\varphi(L_1)\subset\Lambda_0$. By modifying the original embedding of $N_G$ by $\varphi$, we may therefore
assume  that in fact $L_1\subset\Lambda_0$.

\smallskip

Due to the surjectivity of the period map, there exists a K3 surface $X$ and a marking $H^2(X,\ZZ)\cong\Lambda_0$ such that $(H^{2,0}\oplus H^{0,2})(X)\cong L_1\otimes \CC$. Note that $X$ is automatically projective. We denote by $P_1:=L_1\otimes\RR$ the real positive plane associated to $L_1$.
Now choose a generic real positive plane in $P_2\subset P_1^\perp\cap(N_G^\perp\otimes\RR)$
with $v\in P_2$ and such that $(P_1\oplus P_2)^\perp$ does not contain any 
$(-2)$-class. The latter is possible, because otherwise there would be a $(-2)$-class $\delta$
with $(\delta.v')=0$
for any class $v'\in N_G^\perp\otimes \RR$ in an open subset and, therefore, one in $N_G$, which is absurd.
Any such $P_2$ contains elements with non-trivial $H^0$ component and is, therefore,
spanned by the real and the imaginary part of some $Z=\exp(B+i\alpha)$ with
$B,\alpha\in H^{1,1}(X,\RR)$.

 As by construction there are no $(-2)$-classes in $\widetilde H^{1,1}(X,\ZZ)$ orthogonal to $Z$, there exists a stability condition of the form $\sigma=(\kp,Z)\in{\rm Stab}^{\rm o}(X)$.
We may furthermore assume $Z(v)\in\HH\cup \RR_{<0}$. Via the marking
$G$ can be seen a subgroup of the group
${\rm Aut}_s(\widetilde H(X,\ZZ),Z)$ of symplectic Hodge isometries fixing $Z$, for the period $L_1$ and 
the stability function $Z$ are by construction both $G$-invariant.

Due to Theorem \ref{thm:main}, $G$ can thus be realized as a subgroup of ${\rm Aut}_{s}(\Db(X),\sigma)$.

\smallskip

{\bf Claim:} The stability condition $\sigma$ is $v$-generic. 

\smallskip

Suppose $E$ is semistable with $v(E)=v$ and phase $\phi(E)\in(0,\pi]$ and suppose there exists a
semistable subobject $F\,\hookrightarrow E$ in the heart of the stability condition with  $\phi(F)=\phi(E)$.
Decompose $v(F)$ as $v(F)=v_1+v_2$ according to the finite index inclusion
$$L\oplus L^\perp\subset \widetilde H(X,\ZZ),$$ i.e.\ $v_1\in L\otimes\QQ$ and
$v_2\in L^\perp\otimes\QQ$. 
From $v(F)\in P_1^\perp$ and $L^\perp\subset P_1^\perp$, one concludes that $v_1$ is contained in $(L\otimes \QQ)\cap P_2$
which is spanned by $v$. Hence, $v_1=\lambda\cdot v$ for some $\lambda\in\QQ$. Decomposing $v_2$ further
as $v_2=v_2'+v_2''$ with $v_2'\in P_2\cap v^\perp$ and $v_2''\in (P_1\oplus P_2)^\perp$ allows one to write $Z(F)=\lambda\cdot Z(v)+
Z(v_2')$. Now,  $Z(F)\in \RR_{>0}\cdot Z(v)$, as $\phi(F)=\phi(E)$,  and hence $Z(v_2')\in \RR\cdot Z(v)$. 
However, using the injectivity of $Z\colon P_2\,\hookrightarrow \CC$ one finds $v_2'=0$ and, therefore, $Z(F)=\lambda\cdot Z(v)$ with $0< \lambda\leq 1$.
On the other hand, the projection of $v(F)$ under $\Lambda\cong\Lambda^*\to  L^*\subset L\otimes \QQ$ is $v_1=\lambda\cdot v$. As by construction $v$ is primitive in $L^*$, this implies
 $\lambda=1$. But then the semistable quotient
$E/F$  would have Mukai vector $-v_2=-v_2''$ which is annihilated by $Z$. This yields a
contradiction unless $v_2=0$, in which case $F=E$.

\smallskip

Consider the moduli space $M_\sigma(v)$  of $\sigma$-stable objects with Mukai vector $v$ and
phase $\phi\in (0,\pi]$. Then we know by \cite{BaMa} that $M_\sigma(v)$ is a smooth projective
variety birational to a moduli space of stable sheaves on $X$ and therefore, due to \cite{Huy1},
deformation equivalent to it and, eventually, also deformation equivalent to ${\rm Hilb}^n(X)$.

Any $\Phi\in G\subset{\rm Aut}_s(\Db(X),\sigma)$ fixes $\sigma$ and $v$ and, therefore, acts as an automorphism
$$\Phi_v\colon M_\sigma(v)\congpf M_\sigma(v).$$ This yields a homomorphism
$$G\to {\rm Aut}(M_\sigma(v)),\Phi\mapsto \Phi_v.$$

First one observes that all $\Phi_v$ are symplectic. For this it is enough to check that $\Phi_v$ preserves
the natural symplectic structure on $M_\sigma(v)$. This can in fact be verified in one point, say
$[E]\in M_\sigma(v)$. So one has to argue that if $\Phi\colon \Db(X)\congpf\Db(X)$
acts as ${\rm id}$ on $H^{2,0}(X)$, then the isomorphism $\Ext^1(E,E)\cong \Ext^1(\Phi(E),\Phi(E))$
respects the natural pairing given by Serre duality, which is obvious. Second, the map $\Phi\mapsto\Phi_v$ is
injective, as it is compatible with the isomorphism $v^\perp\cong H^2(M_\sigma(v),\ZZ)$. (For simplicity, we assume
here that $M_\sigma(v)$ is fine. Otherwise $M_\sigma(v)$ has to be twisted and the action of $G$ on the
twisted cohomology and hence on the moduli space itself is faithful.)
\end{proof}

\begin{remark}\label{rem:Mong} In \cite{Mongardi},
based on Markman's monodromy operators, Mongardi also finds
a sufficient condition for a group $G\subset \Co_0$ to act as a group of symplectic automorphisms
on a  variety deformation equivalent to a Hilbert scheme. Moreover, he  shows that his condition
is in fact equivalent to ($\ast$). The methods in \cite{Mongardi} should be powerful enough to eventually give a complete
characterization of such groups, whereas it seems unlikely that one can obtain  a necessary condition by our methods.
\end{remark}

\end{document}